\documentclass[conference, 10pt, twocolumn]{ieeeconf}       
\bstctlcite{bstctl:etal, bstctl:nodash, bstctl:simpurl}
\def\BibTeX{{\rm B\kern-.05em{\sc i\kern-.025em b}\kern-.08em
    T\kern-.1667em\lower.7ex\hbox{E}\kern-.125emX}}
    
\usepackage[left=54pt, right=54pt,bottom=54pt, top=54pt]{geometry}

\usepackage{amsmath,mathrsfs,amsfonts,amssymb,graphicx,epsfig,booktabs}
 \usepackage{amsthm}
\usepackage{subcaption}
\usepackage{color,multirow,rotating,tcolorbox}
\usepackage{algorithm,algpseudocode,algorithmicx}
\usepackage{cite,url,framed,bm,balance,nicematrix,physics,stfloats}
\usepackage{stmaryrd,mathtools}

\newtheorem{theorem}{Theorem}
\newtheorem{corollary}[theorem]{Corollary}
\newtheorem{definition}{Definition}

\newtheorem{remark}{Remark}

\usepackage{array}
\newtheorem{mytheorem}{Theorem}[]      

\usepackage{tikz}
\usetikzlibrary{calc,trees,positioning,arrows,chains,shapes.geometric,decorations.pathreplacing,decorations.pathmorphing,shapes, matrix,shapes.symbols}

\usepackage[breaklinks]{hyperref}

\renewcommand{\det}{{\mathrm{det}}}

\makeatletter
\newcommand{\pushright}[1]{\ifmeasuring@#1\else\omit\hfill$\displaystyle#1$\fi\ignorespaces}

\newcommand{\dotminus}{\mathbin{\text{\@dotminus}}}

\newcommand{\@dotminus}{%
  \ooalign{\hidewidth\raise1ex\hbox{.}\hidewidth\cr$\m@th-$\cr}%
}

\allowdisplaybreaks

\title{\LARGE\textbf{Symmetrizing Bregman Divergence on the Cone of Positive Definite Matrices: Which Mean to Use and Why
}}

\author{Tushar Sial, Abhishek Halder
\thanks{Tushar Sial and Abhishek Halder are with the Department of Aerospace Engineering, Iowa State University, Ames, IA 50011, USA, {\tt\footnotesize \{tsial,ahalder\}@iastate.edu}.}
\thanks{This research was partially supported by NSF award 2111688.}
}

\IEEEoverridecommandlockouts
\begin{document}
\bstctlcite{IEEEexample:BSTcontrol}
\maketitle
\thispagestyle{empty}
\pagestyle{empty}

\begin{abstract}
This work uncovers variational principles behind symmetrizing the Bregman divergences induced by generic mirror maps over the cone of positive definite matrices. We show that computing the canonical means for this symmetrization can be posed as minimizing the desired symmetrized divergences over a set of mean functionals defined axiomatically to satisfy certain properties. For the forward symmetrization, we prove that the arithmetic mean over the primal space is canonical for any mirror map over the positive definite cone. For the reverse symmetrization, we show that the canonical mean is the arithmetic mean over the dual space, pulled back to the primal space. Applying this result to three common mirror maps used in practice, we show that the canonical means for reverse symmetrization, in those cases, turn out to be the arithmetic, log-Euclidean and harmonic means. Our results improve understanding of existing symmetrization practices in the literature, and can be seen as a navigational chart to help decide which mean to use when.
\end{abstract}

\section{Introduction}\label{sec:Intro}
The Bregman divergence \cite{bregman1967relaxation}, \cite[Ch. 2.1]{censor1997parallel}, \cite{banerjee2005clustering,lewis1996convex,dhillon2008matrix} and its symmetrized variants \cite{nielsen2019jensen,endres2003new} have found widespread usage across control, optimization, and machine learning \cite{bauschke1997legendre,nock2012mining,wang2014bregman,harandi2014bregman,halder2019degroot,halder2020hopfield,hassibi2025beyond}. Intuitively, such divergences are squared distance-like functionals \cite[p. 10]{amari2016information} over appropriate domains. These functionals are not symmetric in general, but can be made so by a symmetrization procedure. This work focuses on the choice of mean used for the symmetrization of a class of Bregman divergence.

To distill the problem, let us first introduce the background ideas and notations.

\noindent\textbf{Geometry of Positive Definite Matrices.}\\
\noindent Let $\mathbb{S}^{n}_{++}$ denote the open convex cone of $n\times n$ positive definite matrices, i.e.,
\begin{align*}
 \mathbb{S}^{n}_{++} :=\{X\in\mathbb{R}^{n\times n} \mid X^{\top}=X, &\; v^{\top}X v > 0\\
 &\forall \:\text{nonzero vector}\;v\in\mathbb{R}^{n}\}.   
\end{align*}
Likewise, $\mathbb{S}^{n}_{--}$ denotes the convex cone of $n\times n$ negative definite matrices.

Any $X\in\mathbb{S}^{n}_{++}$ admits polar decomposition \cite[Thm. 7.3.1]{horn2012matrix}
\begin{align}
X = YY^{\top}=(UZ)(UZ)^{\top}=U Z^2 U^{\top},
\label{PolarDecomposition}
\end{align}
where $Y\in{\mathrm{GL}}(n)$ (the set of $n\times n$ invertible matrices with real entries), $U\in{\mathrm{O}}(n)$ (the set of $n\times n$ orthogonal matrices), and $Z\in\mathbb{S}^{n}_{++}$. The decomposition \eqref{PolarDecomposition} highlights the quotient geometry
$$\mathbb{S}^{n}_{++}\cong {\mathrm{GL}}(n)/{\mathrm{O}}(n)\cong\left({\mathrm{O}}(n)\times\mathbb{S}^{n}_{++}\right)/{\mathrm{O}}(n),$$
which implies a ${\mathrm{GL}}(n)$-invariant Riemannian metric on $\mathbb{S}^{n}_{++}$, referred to as the \emph{natural metric} \cite{faraut1994analysis,bonnabel2010riemannian}. Recall that at any $A\in\mathbb{S}^{n}_{++}$, the tangent space ${\mathrm{T}}_{A}\mathbb{S}^{n}_{++}\subseteq\mathbb{S}^{n}$ (the set of $n\times n$ symmetric matrices), and the ${\mathrm{GL}}(n)$-invariant metric at the tangent space at identity is equipped with the Frobenius inner product, i.e., $g_{I}(X,Y):=\langle X,Y\rangle = \tr(XY)$. So the ${\mathrm{GL}}(n)$-invariant metric at arbitrary $A\in\mathbb{S}^{n}_{++}$ must be given by \cite{moakher2005differential,bonnabel2010riemannian,halder2016finite}
\begin{align}
g_{A}(X,Y):=\langle X,\left(\mathcal{L}_{A}\circ\mathcal{R}_{A}\right)^{-1}Y\rangle ={\mathrm{trace}}\!\left(\!A^{-1}XA^{-1}Y\right),
\label{RiemannianMetric}    
\end{align}
where $\mathcal{L}_{A}X:=AX$ (left multiplication by $A$), and $\mathcal{R}_{A}X:=XA$ (right multiplication by $A$).

Endowed with \eqref{RiemannianMetric}, we view $\mathbb{S}^{n}_{++}$ as a Riemannian manifold of nonpositive curvature \cite[p. 226-227]{bhatia2009positive}. In this manifold, any $\mathcal{C}^{1}$ curve $\gamma:[0,1]\to\mathbb{S}^{n}_{++}$ has length $\int_{0}^{1}\|\gamma^{-\frac{1}{2}}(t)\gamma^{\prime}(t)\gamma^{-\frac{1}{2}}(t)\|_{\mathrm{F}}\:\differential t$ where $\|\cdot\|_{\mathrm{F}}$ denotes the Frobenius norm, and $^{\prime}$ denotes the derivative. For fixed $A,B\in\mathbb{S}^{n}_{++}$, the minimal Riemannian geodesic curve
\begin{align}
\gamma^{\mathrm{opt}}(t) := &\underset{\gamma(\cdot)\in\mathcal{C}^{1}\left([0,1];\:\mathbb{S}^{n}_{++}\right)}{\arg\inf}\displaystyle\int_{0}^{1} \|\gamma^{-\frac{1}{2}}(t)\gamma^{\prime}(t)\gamma^{-\frac{1}{2}}(t)\|_{\mathrm{F}}\:\differential t,\nonumber\\
&\quad\mathrm{subject\,to}\quad \gamma(0)=A, \quad \gamma(1)=B.
\label{defRiemannianGeodesic}    
\end{align}
In particular \cite{pusz1975functional,ando2004geometric}, 
\begin{align}
\gamma^{\mathrm{opt}}(t) = A^{\frac{1}{2}}\left(A^{-\frac{1}{2}}BA^{-\frac{1}{2}}\right)^{t}A^{\frac{1}{2}} \quad \forall t\in[0,1].
\label{MinimalRiemannianGeodesic}    
\end{align}
\noindent\textbf{Bregman divergence on $\mathbb{S}^{n}_{++}$.}\\
\noindent The Bregman divergence on $\mathbb{S}^{n}_{++}$ is induced by a mapping $\psi:\mathbb{S}_{++}^{n}\mapsto\mathbb{R}$, where $\psi$ is a strictly convex $\mathcal{C}^{2}$ function of Legendre type \cite[p. 258]{tyrrell1970convex}. In the mirror descent literature \cite[Sec. 4.1]{bubeck2015convex}, \cite{nemirovskij1983problem,halder2019degroot}, $\psi$ is referred to as the \emph{mirror map}. The formal definition for Bregman divergence is as follows. 
\begin{definition}[Bregman divergence on $\mathbb{S}^{n}_{++}$]\label{def:BregmanDivergence}
Let $\psi$ be a closed, proper, strictly convex, and twice continuously differentiable function on $\mathbb{S}^{n}_{++}$, $\nabla\psi(X)$ is a diffeomorphism $\forall X\in\mathbb{S}^{n}_{++}$, and $\|\nabla\psi(X)\|_2 \rightarrow\infty$ as $X\rightarrow{\mathrm{boundary}}({\mathrm{closure}}(\mathbb{S}^{n}_{++}))$. The \emph{Bregman divergence induced by $\psi$} is a mapping $D_{\psi}:\mathbb{S}_{++}^{n}\times\mathbb{S}_{++}^{n}\mapsto\mathbb{R}_{\geq 0}$ given by
\begin{align}
D_{\psi}(X,Y) := \psi(X) - \bigg\{\psi(Y)+ \bigg\langle\frac{\partial\psi}{\partial Y},X-Y\bigg\rangle\bigg\},
\label{defBregmanDivOnPosDefCone}    
\end{align}
where $\langle\cdot,\cdot\rangle$ denotes the Frobenius inner product. 
\end{definition}

\begin{remark}\label{remark:LegendreType}
That $\psi$ is of Legendre-type on $\mathbb{S}^{n}_{++}$, as standard in mirror map definition, is stronger than requiring closed, proper, strictly convex and $\mathcal{C}^2$. This is because Legendre-type $\mathcal{C}^2$ additionally requires $\nabla\psi$ to be a diffeomorphism (differentiable bijection with differentiable inverse) and its magnitude to be unbounded at the boundary of the closure of $\mathbb{S}^{n}_{++}$. For instance, if $\psi$ were only closed, proper, strictly convex and $\mathcal{C}^2$, then we can guarantee its Hessian ${\mathrm{H}}\psi$ to be positive semidefinite everywhere in its domain. The additional requirement of $\nabla\psi$ to be a diffeomorphism implies (by inverse function theorem) that ${\mathrm{H}}\psi$ is, in fact, positive definite on the open convex set $\mathbb{S}^{n}_{++}$.
\end{remark}

 Geometrically, $D_{\psi}$ quantifies the error at $X\in\mathbb{S}^{n}_{++}$ due to first-order Taylor approximation of $\psi$ at $Y\in\mathbb{S}^{n}_{++}$. 
 
 It is straightforward to see that $D_{\psi}\geq 0$ $\forall X,Y\in\mathbb{S}^{n}_{++}$, and $D_{\psi}=0$ iff $X=Y\in\mathbb{S}^{n}_{++}$. Yet $D_{\psi}$ is not a metric in general\footnote{An exception is the case $\psi(X) = \|X\|_{\mathrm{F}}^2$ (squared Frobenius norm) which yields $D_{\psi}(X,Y) = \|X-Y\|_{\mathrm{F}}^2$, see first row of Table \ref{table:BregmanExamples}.} due to lack of symmetry (i.e., $D_{\psi}(X,Y)\neq D_{\psi}(Y,X)$) and failure to satisfy triangle inequality. Interestingly, $D_{\psi}$ is a convex function w.r.t. its first argument.

\begin{table}[!t]
\centering
\caption{Mirror maps and Bregman divergences on $\mathbb{S}^{n}_{++}$.}
\label{table:BregmanExamples}
\begin{tabular}{ c | c }
\toprule 
Mirror map $\psi$ & Bregman divergence $D_{\psi}$\\[0.4ex]
\hline
$\|X\|_{\mathrm{F}}^{2}$ & $\|X-Y\|_{\mathrm{F}}^{2}$\\[0.1ex]
${\mathrm{trace}}(X\log X - X)$ & ${\mathrm{trace}}(X\log X -X\log Y - X + Y)$\\[0.1ex]
$-\log\det X$ & ${\mathrm{trace}}(XY^{-1}) - \log\det(XY^{-1}) - n$\\
\bottomrule 
\end{tabular}
\end{table} 

Table \ref{table:BregmanExamples} lists well-known \cite{lewis1996convex,dhillon2008matrix} examples of $\psi, D_{\psi}$ on $\mathbb{S}^{n}_{++}$. See also \cite[Table 15.1]{nock2012mining}. In particular, the mirror map $\psi$ in the \emph{first row} of Table \ref{table:BregmanExamples} is the squared Frobenius norm, which induces the squared Frobenius norm of the difference as $D_{\psi}$. 

The mirror map $\psi$ in the \emph{second row} of Table \ref{table:BregmanExamples} is the negative von Neumann entropy \cite[Ch. 13]{bengtsson2017geometry}, which induces Umegaki's quantum relative entropy \cite{umegaki1962conditional,hiai1991proper} as $D_{\psi}$.

The mirror map $\psi$ in the \emph{third row} of Table \ref{table:BregmanExamples} is the Burg's entropy\footnote{Burg's entropy is the natural self-concordant barrier for $\mathbb{S}^{n}_{++}$, and is used in interior point algorithms \cite[Ch. 5.4]{nesterov1994interior}.} \cite{burg1975maximum}, which induces (twice of) the classical relative entropy or the Kullback-Leibler divergence between centered Gaussian distributions as $D_{\psi}$.

\noindent\textbf{Symmetrization of $D_{\psi}$ on $\mathbb{S}^{n}_{++}$.}\\
\noindent One way to fix the lack of symmetry in \eqref{defBregmanDivOnPosDefCone} is to consider the \emph{Jeffrey's symmetrization} \cite{nielsen2019jensen}: $\frac{1}{2}\{D_{\psi}(X\parallel Y) + D_{\psi}(Y\parallel X)\}$. In this work, we focus on a different class of \emph{symmetrized Bregman divergence}, defined as
\begin{align}
&D_{\psi}^{\overrightarrow{\mathrm{symm}}}(X,Y;M)\nonumber\\
&:= \dfrac{1}{2}\bigg\{D_{\psi}\left(X\parallel M(X,Y)\right) + D_{\psi}\left(Y\parallel M(X,Y)\right)\bigg\},
\label{defSymmetrizedBregmanDiv}
\end{align}
or as
\begin{align}
&D_{\psi}^{\overleftarrow{\mathrm{symm}}}(X,Y;M)\nonumber\\
&:= \dfrac{1}{2}\bigg\{D_{\psi}\left(M(X,Y)\parallel X\right) + D_{\psi}\left(M(X,Y)\parallel Y\right)\bigg\},
\label{defReverseSymmetrizedBregmanDiv}
\end{align}
where $M(\cdot,\cdot)$ is a \emph{mean} on $\mathbb{S}^{n}_{++}$ given axiomatically in Definition \ref{defGeneralizedMean} below. Notice that the ``forward'' symmetrization \eqref{defSymmetrizedBregmanDiv}, and  the ``reverse'' symmetrization \eqref{defReverseSymmetrizedBregmanDiv} yield different functionals in general, due to the lack of symmetry of $D_{\psi}$.

\begin{definition}[Mean on $\mathbb{S}^{n}_{++}$]\label{defGeneralizedMean}\cite[Ch. 4, p. 101]{bhatia2009positive}
A mapping $M:\mathbb{S}^{n}_{++}\times \mathbb{S}^{n}_{++} \mapsto \mathbb{S}^{n}_{++}$ is called a mean if:
\begin{itemize}
\item[(i)] $M(A,B) \succ 0$,
\item[(ii)] $M(A,B) = M(B,A)$,
\item[(iii)] $A \preceq B \,\Rightarrow\; A \preceq M(A,B) \preceq B$,
\item[(iv)] $M(A,B)$ is operator monotone increasing in $A,B$,
\item[(v)] $M(P^{\top}A P, P^{\top}BP) = P^{\top}M(A,B)P\quad\forall P\in{\rm{GL}}(n)$,
\item[(vi)] $M(A,B)$ is continuous in $A,B$,
\end{itemize}
for all $A,B\in\mathbb{S}^{n}_{++}$, where the curved inequalities are understood in the L\"{o}wner partial order sense.
\end{definition}
Definition \ref{defGeneralizedMean} is closely related to a finite-dimensional symmetric
Kubo–Ando operator mean \cite{kubo1980means}. In the special case $n=1$, $\mathbb{S}^{n}_{++}\equiv\mathbb{R}_{>0}$. In that case, using lowercase for scalars, familiar examples of $M$ are arithmetic mean, geometric mean, harmonic mean, logarithmic mean, given respectively as 
$$\frac{a+b}{2},\quad\sqrt{ab},\quad\left(\frac{a^{-1} + b^{-1}}{2}\right)^{\!\!-1}\!\!\!\!,\quad\int_{0}^{1}a^{t}b^{1-t}\differential t.$$
Per Definition \ref{defGeneralizedMean}, these examples generalize on $\mathbb{S}^{n}_{++}$, respectively, as 
\begin{align*}
\dfrac{A+B}{2},\,A^{\frac{1}{2}}\!\!\left(\!A^{-\frac{1}{2}}BA^{-\frac{1}{2}}\!\right)^{\!\!\frac{1}{2}}\!\!A^{\frac{1}{2}}, \, \left(\!\!\dfrac{A^{-1} + B^{-1}}{2}\!\right)^{\!\!\!-1}\!\!\!\!, \int_{0}^{1}\!\!\!A^{t}B^{1-t}\differential t. 
\end{align*}

\begin{remark}[Geometric mean]\label{Reamrk:GeometricMean}
The geometric mean $A^{\frac{1}{2}}\!\!\left(\!\!A^{-\frac{1}{2}}BA^{-\frac{1}{2}}\!\!\right)^{\!\frac{1}{2}}\!\!A^{\frac{1}{2}}$ reduces to $A^{\frac{1}{2}}B^{\frac{1}{2}}$ iff $A,B$ commute\footnote{In general, $A^{\frac{1}{2}}B^{\frac{1}{2}}$ is not symmetric for $A,B\in\mathbb{S}^{n}_{++}$.}. From \eqref{MinimalRiemannianGeodesic}, this non-commutative generalization of geometric mean is precisely the midpoint of the minimal Riemannian geodesic in $\mathbb{S}^{n}_{++}$, i.e., $\gamma^{\mathrm{opt}}(\frac{1}{2})$. A commonly used notation is
\begin{align}
A\:\#_{t}\:B := \gamma^{\mathrm{opt}}\left(t\right)\:\forall t\in[0,1], \,\text{and}\,A\:\#\:B := \gamma^{\mathrm{opt}}\left(\frac{1}{2}\right).
\label{HashNotation}    
\end{align}
\end{remark}

\begin{remark}[Logarithmic mean]\label{Reamrk:LogarithmicMean}
In the scalar case, the term logarithmic mean refers to the fact that for $a,b>0$, the integral $\int_{0}^{1}a^{t}b^{1-t}\differential t$ equals $ \frac{a-b}{\log a - \log b}$ for $a\neq b$, and equals $a$ for $a=b$. In the $n>1$ case, the integral does not simplify for general non-commutative $A,B\in\mathbb{S}^{n}_{++}$. 
\end{remark}

\begin{remark}[Log-Euclidean mean]\label{Reamrk:LogEuclideanMean}
Different from the geometric mean $A\:\#\:B$, is the log-Euclidean mean
\begin{align}
\exp\left(\dfrac{\log A + \log B}{2}\right),
\label{LogEuclideanMean}    
\end{align}
where $\log$ denotes the principal logarithm. This mean satisfies all properties in Definition \ref{defGeneralizedMean} except that (v) holds only for $P\in{\mathrm{O}}(n)$. The geometric and log-Euclidean means become equal (to the value $A^{\frac{1}{2}}B^{\frac{1}{2}}$) iff $A,B$ commute.
\end{remark}

Motivated by Remark \ref{Reamrk:LogEuclideanMean}, we define the following two sets of means.
\begin{definition}[${\mathrm{GL}}(n)$ and ${\mathrm{O}}(n)$ invariant means]\label{def:InvariantMeans}
Let $\mathcal{M}$ denote the set of means on $\mathbb{S}^{n}_{++}$ satisfying Definition \ref{defGeneralizedMean}. Elements of $\mathcal{M}$ are called ${\mathrm{GL}}(n)$ invariant due to property (v) in Definition \ref{defGeneralizedMean}. Let $\widehat{\mathcal{M}}$ denote the set of means on $\mathbb{S}^{n}_{++}$ satisfying all of Definition \ref{defGeneralizedMean} except that the ``$\forall P\in{\mathrm{GL}}(n)$'' in (v) replaced by ``$\forall P\in{\mathrm{O}}(n)$''. Elements of $\widehat{\mathcal{M}}$ are called ${\mathrm{O}}(n)$ invariant. We note the set inclusion:
\begin{align}
\mathcal{M}\subset\widehat{\mathcal{M}}.
\label{SetInclusion}    
\end{align}
\end{definition}
\noindent To exemplify Definition \ref{def:InvariantMeans}, we note that the arithmetic, geometric, harmonic and logarithmic means over $\mathbb{S}^{n}_{++}$ are in $\mathcal{M}$. The log-Euclidean mean \eqref{LogEuclideanMean} is in $\widehat{\mathcal{M}}$ but not in $\mathcal{M}$. 

\noindent\textbf{Motivation.}\\
\noindent Now that we have the definition and examples for the generalized mean $M$ in place, let us return to \eqref{defSymmetrizedBregmanDiv}. We think of $D_{\psi}^{\overrightarrow{\mathrm{symm}}}$ in \eqref{defSymmetrizedBregmanDiv} to be parameterized by a choice of generalized mean $M$. If we choose $M$ to be the \emph{arithmetic mean} in \eqref{defSymmetrizedBregmanDiv}, then $D_{\psi}^{\overrightarrow{\mathrm{symm}}}$ reduces to the well-known \emph{Stein or Jensen-Shannon symmetrization} \cite{nielsen2019jensen}
\begin{align}
\dfrac{\psi(X) + \psi(Y)}{2} -\psi\left(\dfrac{X+Y}{2}\right),
\label{JensenShannonSymmetrization}
\end{align}
which follows from \eqref{defBregmanDivOnPosDefCone}. In particular, \cite{sra2016positive} showed that specializing \eqref{JensenShannonSymmetrization} with $\psi(X) = -\log\det X$ yields  a squared distance functional named therein as the $S$-divergence\footnote{The $S$-divergence can be interpreted as the Bhattacharya distance \cite{bhattacharyya1943measure} between two centered Gaussian probability distributions, but is not a metric. See also \cite{cherian2011efficient}.}. In other words, the square root of \eqref{JensenShannonSymmetrization} then is a metric on $\mathbb{S}^{n}_{++}$.

These observations raise natural questions:
\begin{itemize}
\item is there something special about the choice of $M$ as arithmetic mean in \eqref{defSymmetrizedBregmanDiv} as the literature on specific cases seem to suggest?

\item what should be the canonical choice of $M$ in \eqref{defReverseSymmetrizedBregmanDiv}?

\item should the choice of canonical $M$ be affected by the specificity of $\psi$?
\end{itemize} 
The mathematical developments in this work are motivated by these questions.

\noindent\textbf{Contributions and related works.}\\
\noindent The purpose of this work is to clarify canonical choices of $M$ for \eqref{defSymmetrizedBregmanDiv} and \eqref{defReverseSymmetrizedBregmanDiv}. Specifically, we formulate variational principles that guide the canonical choices of the mean $M$ independent of $\psi$.

To the best of our knowledge, the questions we posed above regarding the symmetrization \eqref{defSymmetrizedBregmanDiv} and \eqref{defReverseSymmetrizedBregmanDiv} over $\mathbb{S}^{n}_{++}$, have not been investigated before. The related work in \cite{nielsen2019jensen} is in a different spirit: it focuses on a statistical viewpoint and considers a specific symmetrization and specific $\psi$ resulting in the  Kullback-Leibler divergence.

By answering the aforesaid questions, we improve the understanding of Bregman symmetrization, specifically by showing that the choice of $M$ is not ad hoc, but is principled by variational reasoning. 

\noindent\textbf{Organization.}\\
\noindent The remaining of this work is structured as follows. Sec. \ref{sec:MainResults} derives the main results, viz. the optimal choice of canonical means (Theorem \ref{thm:arthmeticmean} and Theorem \ref{thm:geometricmean}), and the corresponding canonical forward and reverse symmetrized Bregman divergences (Corollary \ref{corollary:Forward} and Corollary \ref{corollary:Backward}). Specifically, Theorem \ref{thm:arthmeticmean} shows that for forward symmetrization, the arithmetic mean is canonical irrespective of the choice of mirror map $\psi$. Theorem \ref{thm:geometricmean} establishes that for reverse symmetrization, the canonical mean is obtained by first computing the arithmetic mean in dual space, and then pulling it back to the primal space. Sec. \ref{sec:examples} details the computational specifics for the three examples in Table \ref{table:BregmanExamples}, thereby illustrating the results from Sec. \ref{sec:MainResults} for those three commonly used mirror maps on $\mathbb{S}^{n}_{++}$. Sec. \ref{sec:conclusion} concludes the work.


\section{Main Results}\label{sec:MainResults}
We seek to choose the ``best'' $M$ for fixed $X,Y\in\mathbb{S}^{n}_{++}$ for the symmetrized Bregman divergence functionals \eqref{defSymmetrizedBregmanDiv} or \eqref{defReverseSymmetrizedBregmanDiv}. Since these functionals are, by definition, non-negative and symmetric, it is natural to define the canonical mean $M$ as the least conservative choice, i.e., the $M$ which \emph{minimize} \eqref{defSymmetrizedBregmanDiv} or \eqref{defReverseSymmetrizedBregmanDiv}. Thus motivated, we define\footnote{Problems \eqref{McanForward}-\eqref{McanBackward} are, in the general (non-matrix) Bregman setting, instances of \emph{left-} and \emph{right-sided Bregman centroids} studied in \cite{nielsen2009sided}; our contribution is in identifying these centroids with specific matrix means on $\mathbb{S}^{n}_{++}$.}
\begin{subequations}
\begin{align}
&\overrightarrow{M}_{\mathrm{canonical}} := \underset{M\in\mathcal{M}}{\arg\min}\; D_{\psi}^{\overrightarrow{\mathrm{symm}}}(X,Y;M),\label{McanForward}\\
&\overleftarrow{M}_{\mathrm{canonical}} := \underset{M\in\widehat{\mathcal{M}}}{\arg\min}\; D_{\psi}^{\overleftarrow{\mathrm{symm}}}(X,Y;M).\label{McanBackward}
\end{align}
\label{defCanonicalM}
\end{subequations}
The reason for using $\widehat{\mathcal{M}}$ instead of $\mathcal{M}$ in \eqref{McanBackward} will be explained in Sec. \ref{subsec:GeometricMean} (proof of Theorem \ref{thm:geometricmean} and Remark \ref{remark:MversusMhat}). At this point, it is not \emph{a priori} clear if such minimizers exist or unique or even if they do, whether they depend on the choice of $\psi$ in addition to the fixed data $X,Y\in\mathbb{S}^{n}_{++}$. Last but not the least, it is unclear how to perform the minimization since it is not apparent how to parameterize $\mathcal{M}$ or $\widehat{\mathcal{M}}$.


\subsection{\!\!Arithmetic Mean is Canonical for Forward Symmetrization}\label{subsec:ArithmeticMean}
Our first result concerns with \eqref{McanForward}, i.e., the choice of canonical mean in \emph{forward} Bregman symmetrization \eqref{defSymmetrizedBregmanDiv}.
\begin{theorem}\label{thm:arthmeticmean}
Consider the forward Bregman symmetrization \eqref{defSymmetrizedBregmanDiv} with fixed $X,Y\in\mathbb{S}^{n}_{++}$. The unique minimizer in \eqref{McanForward} is
\begin{align}
\overrightarrow{M}_{\mathrm{canonical}} = \dfrac{X+Y}{2}.
\label{ForwardMinimizer}    
\end{align}
\end{theorem}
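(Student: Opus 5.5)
The plan is to decouple the problem: first minimize the objective pointwise over an unconstrained matrix variable $Z\in\mathbb{S}^{n}_{++}$, and then check that the minimizer is realized by an element of $\mathcal{M}$. For fixed $X,Y$, the value $D_{\psi}^{\overrightarrow{\mathrm{symm}}}(X,Y;M)$ depends on $M$ only through the single matrix $Z:=M(X,Y)$. So I would study
\begin{align*}
F(Z) := \tfrac{1}{2}\left\{D_{\psi}(X\parallel Z)+D_{\psi}(Y\parallel Z)\right\}, \quad Z\in\mathbb{S}^{n}_{++}.
\end{align*}
Expanding with \eqref{defBregmanDivOnPosDefCone} and writing $\bar{X}:=\frac{X+Y}{2}$, I expect the key identity
\begin{align*}
F(Z) = \frac{\psi(X)+\psi(Y)}{2}-\psi(Z)-\langle\nabla\psi(Z),\bar{X}-Z\rangle = F(\bar{X}) + D_{\psi}(\bar{X}\parallel Z).
\end{align*}
To verify it, note that $F(\bar{X}) = \frac{\psi(X)+\psi(Y)}{2}-\psi(\bar{X})$, which is exactly \eqref{JensenShannonSymmetrization}. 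The two sides then agree because the terms linear in $X,Y$ combine into the mean $\bar{X}$; this is the standard Bregman "Pythagorean"/centroid identity, and it is a routine algebraic check.

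The lower bound and pointwise uniqueness follow from this identity. Since $D_{\psi}\geq 0$, with equality iff its arguments coincide, we get $F(Z)\geq F(\bar{X})$ for all $Z\in\mathbb{S}^{n}_{++}$, with equality iff $Z=\bar{X}$. This step uses only the strict convexity of $\psi$, as stated after Definition \ref{def:BregmanDivergence}, and the fact that $\bar{X}\in\mathbb{S}^{n}_{++}$ by convexity of the cone.

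The next step is to pass back to means. The arithmetic mean $M_{\mathrm{AM}}(A,B)=\frac{A+B}{2}$ lies in $\mathcal{M}$, as already noted after Definition \ref{def:InvariantMeans}; properties (i)--(vi) of Definition \ref{defGeneralizedMean} are immediate. Therefore
\begin{align*}
\min_{M\in\mathcal{M}} D_{\psi}^{\overrightarrow{\mathrm{symm}}}(X,Y;M) \geq \inf_{Z} F(Z) = F(\bar{X}),
\end{align*}
and this bound is attained by $M_{\mathrm{AM}}$, so the minimum exists. Moreover, any minimizer $M$ must satisfy $M(X,Y)=\bar{X}$.

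The main obstacle is the precise meaning of \emph{unique minimizer}. The objective only constrains the value $M(X,Y)$ at the fixed data, so another mean in $\mathcal{M}$ that happens to agree with $\frac{X+Y}{2}$ at this particular pair would tie. I would therefore state uniqueness as uniqueness of the matrix $\overrightarrow{M}_{\mathrm{canonical}}=M(X,Y)=\frac{X+Y}{2}$, which is what \eqref{ForwardMinimizer} asserts. I would add that, because the conclusion holds for every pair $(X,Y)$, the only mean minimizing the objective simultaneously for all data is the arithmetic mean itself. Finally, I would remark that the minimizer is independent of $\psi$ and that the optimal value is the Jensen--Shannon symmetrization \eqref{JensenShannonSymmetrization}.
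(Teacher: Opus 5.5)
Your proposal is correct and follows essentially the paper's route: the paper also rewrites the objective as the Jensen gap \eqref{JensenShannonSymmetrization} plus $D_{\psi}\bigl(\tfrac{X+Y}{2}\parallel M\bigr)$, then concludes from the nonnegativity and definiteness of $D_{\psi}$ and the membership of the arithmetic mean in $\mathcal{M}$. Your added point is a precision the paper leaves implicit: uniqueness really means uniqueness of the value $M(X,Y)$, and only the arithmetic mean is optimal for all data simultaneously.
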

\begin{proof}
Combining \eqref{defBregmanDivOnPosDefCone} and \eqref{defSymmetrizedBregmanDiv}, we write
\begin{align}
&D_{\psi}^{\overrightarrow{\mathrm{symm}}}(X,Y;M) = \frac{1}{2}\psi(X)+\frac{1}{2}\psi(Y)-\psi(M)\nonumber\\
&\qquad\qquad-\frac{1}{2}\bigg\langle\frac{\partial\psi}{\partial M},X+Y\bigg\rangle+\bigg\langle\frac{\partial\psi}{\partial M},M\bigg\rangle.
\label{ForwardIntermsOfpsi}    
\end{align}
Adding and subtracting $\psi\left(\dfrac{X+Y}{2}\right)$ in the right hand side of \eqref{ForwardIntermsOfpsi}, we obtain
\begin{align}
D_{\psi}^{\overrightarrow{\mathrm{symm}}}(X,Y;M) =  &\dfrac{\psi(X) + \psi(Y)}{2} -\psi\left(\dfrac{X+Y}{2}\right) \nonumber\\
& + D_{\psi}\left(\dfrac{X+Y}{2}\parallel M\right),
\label{SumOfTwoTerms}
\end{align}
i.e., $D_{\psi}^{\overrightarrow{\mathrm{symm}}}$ decomposes as the Jensen gap \eqref{JensenShannonSymmetrization} plus the Bregman
divergence between the arithmetic mean and the candidate mean. From the property of Bregman divergence, \eqref{SumOfTwoTerms} has a unique global minimizer $M=\dfrac{X+Y}{2}$.  
Since this solution is an element of the feasible set $\mathcal{M}$ in \eqref{McanForward}, the claim follows.
\end{proof}
\noindent The next Corollary follows from \eqref{SumOfTwoTerms}.
\begin{corollary}\label{corollary:Forward}
The canonical forward symmetrized Bregman divergence on $\mathbb{S}^{n}_{++}$ is the \emph{Stein or Jensen-Shannon symmetrization} \eqref{JensenShannonSymmetrization}.
\end{corollary}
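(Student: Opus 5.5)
The statement to prove is Corollary \ref{corollary:Forward}: the canonical forward symmetrized divergence, $D_{\psi}^{\overrightarrow{\mathrm{symm}}}(X,Y;\overrightarrow{M}_{\mathrm{canonical}})$, equals the Jensen gap \eqref{JensenShannonSymmetrization}. The plan is to evaluate the decomposition \eqref{SumOfTwoTerms}, established in the proof of Theorem \ref{thm:arthmeticmean}, at the minimizer \eqref{ForwardMinimizer}.

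\textbf{Step 1: recheck the decomposition.} For an arbitrary mean $M$, expand $D_{\psi}\bigl(\tfrac{X+Y}{2}\parallel M\bigr)$ using \eqref{defBregmanDivOnPosDefCone}. This gives
\[
\psi\Bigl(\tfrac{X+Y}{2}\Bigr)-\psi(M)-\Bigl\langle\tfrac{\partial\psi}{\partial M},\tfrac{X+Y}{2}-M\Bigr\rangle .
\]
Add the Jensen gap to this expression. The terms $\pm\psi\bigl(\tfrac{X+Y}{2}\bigr)$ cancel. What remains matches \eqref{ForwardIntermsOfpsi} term by term, by linearity of the Frobenius inner product. So \eqref{SumOfTwoTerms} holds identically in $M$.

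\textbf{Step 2: substitute the minimizer.} By Theorem \ref{thm:arthmeticmean}, the minimizer over $\mathcal{M}$ is $M=\tfrac{X+Y}{2}$. At this point the residual term $D_{\psi}\bigl(\tfrac{X+Y}{2}\parallel\tfrac{X+Y}{2}\bigr)$ vanishes, because $D_{\psi}(Z,Z)=0$ directly from \eqref{defBregmanDivOnPosDefCone}. Hence the minimum value of $D_{\psi}^{\overrightarrow{\mathrm{symm}}}(X,Y;\cdot)$ over $\mathcal{M}$ is exactly
\[
\frac{\psi(X)+\psi(Y)}{2}-\psi\Bigl(\frac{X+Y}{2}\Bigr),
\]
which is \eqref{JensenShannonSymmetrization}.

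\textbf{Step 3: confirm it is a genuine symmetrized divergence.} The expression is symmetric in $X$ and $Y$. It is nonnegative by convexity of $\psi$. It vanishes if and only if $X=Y$, by strict convexity, which follows from the Legendre-type assumption in Remark \ref{remark:LegendreType}.

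There is no real obstacle, since this is bookkeeping on top of Theorem \ref{thm:arthmeticmean}. The only point needing care is interpretive: ``canonical'' means evaluating at the unique minimizer. Uniqueness comes from Theorem \ref{thm:arthmeticmean}, using that $D_{\psi}(\cdot\parallel M)$ vanishes only on the diagonal, and that the arithmetic mean indeed lies in $\mathcal{M}$.
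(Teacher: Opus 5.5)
Your proposal is correct and follows the paper's route: the paper derives the corollary directly from the decomposition \eqref{SumOfTwoTerms}, whose second term $D_{\psi}\bigl(\tfrac{X+Y}{2}\parallel M\bigr)$ vanishes at the minimizer $M=\tfrac{X+Y}{2}$, leaving exactly the Jensen gap \eqref{JensenShannonSymmetrization}. Your Step 3 (symmetry, nonnegativity, vanishing only at $X=Y$) is a harmless extra, and the corollary only uses the minimum value (the arithmetic mean lies in $\mathcal{M}$ and attains zero in the nonnegative second term), not the uniqueness of the minimizer.
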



\subsection{Arithmetic Mean in the Dual Space is Canonical for Reverse Symmetrization}\label{subsec:GeometricMean}
Our next result concerns with \eqref{McanBackward}, i.e., the choice of canonical mean in \emph{reverse} Bregman symmetrization \eqref{defReverseSymmetrizedBregmanDiv}. 

Unlike Theorem \ref{thm:arthmeticmean}, the next result requires further assumptions on $\psi$.

\begin{theorem}\label{thm:geometricmean}
Consider the reverse Bregman symmetrization \eqref{defReverseSymmetrizedBregmanDiv} with fixed $X,Y\in\mathbb{S}^{n}_{++}$, and with the following additional assumptions on the mirror map:
\begin{itemize}
\item $\nabla\psi$ is operator monotone\footnote{A matrix function $F(X)$ is called operator monotone if $X\preceq Y\:\Rightarrow\:F(X)\preceq F(Y)$.},
\item $\psi$ is spectral a.k.a. isotropic, i.e., $\psi(P^{\top}XP) = \psi(X)$ $\forall P\in{\mathrm{O}}(n)$.
\end{itemize}
The unique minimizer in \eqref{McanBackward} is
\begin{align}
\overleftarrow{M}_{\mathrm{canonical}} = \nabla\psi^{*}\left(\dfrac{\nabla\psi(X) + \nabla\psi(Y)}{2}\right),
\label{BackwardMinimizer}    
\end{align}
where $\psi^{*}(Y):=\underset{X\in\mathbb{S}^{n}_{++}}{\sup}\big\{\langle Y,X\rangle - \psi(X)\big\}$ is the Legendre-Fenchel conjugate of $\psi$.
\end{theorem}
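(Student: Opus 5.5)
The plan mirrors the proof of Theorem \ref{thm:arthmeticmean}, but carried out in the dual (mirror) coordinates. The basic tool is the duality identity for Bregman divergences of Legendre-type functions,
\begin{align*}
D_{\psi}(A\parallel B) = D_{\psi^{*}}\big(\nabla\psi(B)\parallel \nabla\psi(A)\big),
\end{align*}
which follows from the Fenchel--Young equality $\psi(A)+\psi^{*}(\nabla\psi(A)) = \langle A,\nabla\psi(A)\rangle$ and from $\nabla\psi^{*}=(\nabla\psi)^{-1}$. Both facts are available because $\nabla\psi$ is a diffeomorphism, as noted in Remark \ref{remark:LegendreType}.

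Write $\widetilde{X}:=\nabla\psi(X)$, $\widetilde{Y}:=\nabla\psi(Y)$ and $\widetilde{M}:=\nabla\psi(M)$. Then \eqref{defReverseSymmetrizedBregmanDiv} becomes
\begin{align*}
\tfrac{1}{2}\big\{D_{\psi^{*}}(\widetilde{X}\parallel \widetilde{M}) + D_{\psi^{*}}(\widetilde{Y}\parallel \widetilde{M})\big\},
\end{align*}
which is exactly a \emph{forward} symmetrization for the mirror map $\psi^{*}$. Repeating the add-and-subtract computation that led to \eqref{SumOfTwoTerms}, now with $\psi^{*}$ in place of $\psi$, gives
\begin{align*}
D_{\psi}^{\overleftarrow{\mathrm{symm}}}(X,Y;M) = \frac{\psi^{*}(\widetilde{X})+\psi^{*}(\widetilde{Y})}{2} - \psi^{*}\Big(\frac{\widetilde{X}+\widetilde{Y}}{2}\Big) + D_{\psi^{*}}\Big(\frac{\widetilde{X}+\widetilde{Y}}{2}\parallel \widetilde{M}\Big).
\end{align*}
The first two terms do not depend on $M$. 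The last term is nonnegative and vanishes iff $\widetilde{M}=\frac{\widetilde{X}+\widetilde{Y}}{2}$, that is, iff $M=\nabla\psi^{*}\big(\frac{\nabla\psi(X)+\nabla\psi(Y)}{2}\big)$. One technical point needs checking here: $(\widetilde X+\widetilde Y)/2$ must lie in the domain where $\psi^{*}$ is Legendre, namely the image $\nabla\psi(\mathbb{S}^{n}_{++})$. This holds if that image is convex, which is true because it is the interior of the domain of $\psi^{*}$ for a Legendre function. Strict convexity of $\psi^{*}$ then gives uniqueness among all maps, not only among means.

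The remaining step, and the main obstacle, is feasibility: showing that $M^{\star}(A,B):=\nabla\psi^{*}\big(\frac{\nabla\psi(A)+\nabla\psi(B)}{2}\big)$ lies in $\widehat{\mathcal{M}}$. I would verify the axioms of Definition \ref{defGeneralizedMean} in turn.
\begin{itemize}
\item[(i)] Positive definiteness holds by construction, since $\nabla\psi^{*}$ maps into $\mathbb{S}^{n}_{++}$.
\item[(ii)] Symmetry in $A,B$ is immediate.
\item[(vi)] Continuity follows from $\nabla\psi$ and $\nabla\psi^{*}$ both being continuous.
\item[(v), orthogonal case] Since $\psi$ is spectral, $\nabla\psi(P^{\top}AP)=P^{\top}\nabla\psi(A)P$ for $P\in\mathrm{O}(n)$, and the same holds for $\nabla\psi^{*}$, because the conjugate of a spectral function is spectral.
\item[(iii), (iv)] These use operator monotonicity. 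If $A\preceq B$, then $\nabla\psi(A)\preceq \frac{\nabla\psi(A)+\nabla\psi(B)}{2}\preceq\nabla\psi(B)$. Applying $\nabla\psi^{*}$ requires the inverse map to be order preserving as well. I would argue this from the spectral structure: $\nabla\psi$ is a primary matrix function $f$ of a scalar increasing $f$, so I would aim to show that operator monotonicity passes to $f^{-1}$ on the relevant range. If that fails in general, I would adopt it as the natural reading of the hypothesis, i.e.\ that $\nabla\psi$ is an operator-monotone order isomorphism. Property (iv) follows in the same way, by composing monotone maps.
\end{itemize}

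Finally, I would note why one cannot work in $\mathcal{M}$: full $\mathrm{GL}(n)$ congruence invariance fails in general. For example, $\nabla\psi=\log$ gives the log-Euclidean mean of Remark \ref{Reamrk:LogEuclideanMean}. If one insisted on $\mathcal{M}$, the unconstrained minimizer could be infeasible, and this is the reason \eqref{McanBackward} is posed over $\widehat{\mathcal{M}}$.
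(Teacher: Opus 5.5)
The optimization half of your argument is correct, and it takes a different route from the paper. The paper expands the objective as in \eqref{ReverseIntermsOfpsi}, sets the $M$-gradient to zero, and concludes uniqueness because $\psi(M)$ is strictly convex and the remaining terms are affine in $M$. You instead use $D_{\psi}(A\parallel B)=D_{\psi^{*}}(\nabla\psi(B)\parallel\nabla\psi(A))$ to turn the reverse symmetrization into a forward one for $\psi^{*}$, then reuse the decomposition behind \eqref{SumOfTwoTerms}. This gives an exact identity that delivers the minimizer, its uniqueness, and the optimal value of Corollary~\ref{corollary:Backward} in one step. It also makes you check that the dual average lies in $\nabla\psi(\mathbb{S}^{n}_{++})=\mathrm{int}(\mathrm{dom}\,\psi^{*})$, which is convex; the paper needs this fact but never states it.

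The gap is in axioms (iii)--(iv). After $\nabla\psi(A)\preceq\frac{\nabla\psi(A)+\nabla\psi(B)}{2}\preceq\nabla\psi(B)$ you must apply $\nabla\psi^{*}=(\nabla\psi)^{-1}$. Your plan to show that operator monotonicity passes from $f$ to $f^{-1}$ fails: $\log$ is operator monotone on $(0,\infty)$, but $\exp$ is not (likewise $t^{1/2}$ versus $t^{2}$). This is not an artifact of your method. Take $\nabla\psi=\log$, the case of Sec.~\ref{subsec:NegVonNeumannEntropy}, which meets both stated hypotheses. Choose symmetric $H_{1}\preceq H_{2}$ with $e^{H_{1}}\not\preceq e^{H_{2}}$, and set $A=I$, $A'=e^{2(H_{2}-H_{1})}$, $B=e^{2H_{1}}$. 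Then $A\preceq A'$, yet the candidate returns $e^{H_{1}}$ at $(A,B)$ and $e^{H_{2}}$ at $(A',B)$. So (iv) fails, and the log-Euclidean map is not in $\widehat{\mathcal{M}}$, contrary to Remark~\ref{Reamrk:LogEuclideanMean}, on which your closing paragraph relies. Consequently, no verification of the axioms for this map can succeed under the stated hypotheses.

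Your fallback, requiring $\nabla\psi^{*}$ to be operator monotone as well, is a genuine strengthening of the hypothesis, not a reading of it. It does make (iii)--(iv) go through by composing monotone maps, and it covers the Frobenius and Burg cases. It excludes the von Neumann case. The paper's proof does not close this gap either: it asserts (iii)--(iv) from operator monotonicity of $\nabla\psi$ alone and never addresses the pull-back through $\nabla\psi^{*}$.
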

\begin{proof}

Combining \eqref{defBregmanDivOnPosDefCone} and \eqref{defReverseSymmetrizedBregmanDiv}, we write
\begin{align}
&D_{\psi}^{\overleftarrow{\mathrm{symm}}}(X,Y;M) = \psi(M) -\frac{1}{2}\bigg\langle\frac{\partial\psi}{\partial X}+\frac{\partial\psi}{\partial Y},M\bigg\rangle\nonumber\\
&-\frac{1}{2}\psi(X)-\frac{1}{2}\psi(Y)+\frac{1}{2}\bigg\langle\frac{\partial\psi}{\partial X},X\bigg\rangle + \frac{1}{2}\bigg\langle\frac{\partial\psi}{\partial Y},Y\bigg\rangle.
\label{ReverseIntermsOfpsi}    
\end{align}
Thus,
\begin{align}
\dfrac{\partial}{\partial M}D_{\psi}^{\overleftarrow{\mathrm{symm}}} = \dfrac{\partial\psi}{\partial M} - \frac{1}{2}\left(\frac{\partial\psi}{\partial X}+\frac{\partial\psi}{\partial Y}\right).
\label{FirstDerivativeOfReverse}
\end{align}

From \eqref{FirstDerivativeOfReverse}, the necessary condition for optimality yields
\begin{align}
\overleftarrow{M}_{\mathrm{canonical}} = \left(\nabla\psi\right)^{-1}\left(\dfrac{\nabla\psi(X) + \nabla\psi(Y)}{2} \right),
\label{McanReverseSemi}    
\end{align}
which is well-defined and unique because $\psi$ being of Legendre-type, $\nabla\psi$ is a bijection from $\mathbb{S}^{n}_{++}$ to its dual space (range of the gradient): a subset of $\mathbb{S}^{n}$. Recalling the Fenchel duality result \cite[Sec. 2.2]{nielsen2007bregman} $(\nabla\psi)^{-1}=\nabla\psi^{*}$, the expression \eqref{BackwardMinimizer} follows from \eqref{McanReverseSemi}. Furthermore, the unique critical point \eqref{McanReverseSemi} (equivalently  \eqref{BackwardMinimizer}) is a minimizer because \eqref{ReverseIntermsOfpsi} is strictly convex in candidate mean.

We now verify that \eqref{BackwardMinimizer} (equivalently \eqref{McanReverseSemi}) belongs to $\widehat{\mathcal{M}}$. It is obvious that \eqref{McanReverseSemi} satisfies properties (i) and (ii) in Definition \ref{defGeneralizedMean}. Property (vi) therein is also satisfied since \eqref{McanReverseSemi} is a composition of continuous (in fact $\mathcal{C}^{1}$) maps, and $\psi$ is of Legendre-type. The properties (iii)-(iv) hold only if $\nabla\psi$ is operator monotone, which is an additional assumption\footnote{For the three mirror maps $\psi(X)$ listed in Table \ref{table:BregmanExamples}, the corresponding $\nabla\psi(X)$ are $2X$, $\log X$ (principal logarithm), $-X^{-1}$, respectively, and are all operator monotone. See e.g., \cite[Thm. 2.6]{carlen2010trace}.} on $\psi$ (beyond closed, proper, strictly convex, $\mathcal{C}^2$, Legendre-type) in our statement. That \eqref{McanReverseSemi} belongs to $\widehat{\mathcal{M}}$, and not in $\mathcal{M}$ in general, follows from $\psi$ being spectral\footnote{All three mirror maps $\psi(X)$ listed in Table \ref{table:BregmanExamples} are spectral.}, since then
\begin{align*}
&\left(\nabla\psi\right)^{-1}\left(\dfrac{\nabla\psi(P^{\top}XP) + \nabla\psi(P^{\top}YP)}{2} \right)\\
=&P^{\top}\left(\nabla\psi\right)^{-1}\left(\dfrac{\nabla\psi(X) + \nabla\psi(Y)}{2} \right)P \quad \forall P\in{\mathrm{O}}(n). 
\end{align*}
This completes the proof.
\end{proof}

\begin{remark}\label{remark:intepretation}
Notice that \eqref{ForwardMinimizer} is the \emph{primal} arithmetic mean, i.e., the arithmetic mean in $\mathbb{S}^{n}_{++}$. In contrast,  \eqref{BackwardMinimizer} (equivalently \eqref{McanReverseSemi}) computes the \emph{dual} arithmetic mean, i.e., the arithmetic mean in the dual space $\left(\mathbb{S}^{n}_{++}\right)^{*}=\mathbb{S}^{n}$, which is then pulled back to $\mathbb{S}^{n}_{++}$ via $(\nabla\psi)^{-1}=\nabla\psi^{*}$.
\end{remark}

\begin{remark}\label{remark:MversusMhat}
Even though for generic $\psi$ as in Theorem \ref{thm:geometricmean} statement, we can only guarantee optimality in $\widehat{\mathcal{M}}$, we will see in Sec. \ref{sec:examples} that for two out of the three examples in Table \ref{table:BregmanExamples}, the minimizer \eqref{BackwardMinimizer} will turn out to be from $\mathcal{M}$. In other words, computing the minimizer over $\widehat{\mathcal{M}}$ as in \eqref{McanBackward}, is often not conservative in practice.
\end{remark}

\noindent For sanity check, notice that when $X=Y$, the minimizer \eqref{BackwardMinimizer} gives $\overleftarrow{M}_{\mathrm{canonical}} = (\nabla\psi^{*})(\nabla \psi(X)) = X$, thanks to the Fenchel duality.
\begin{corollary}\label{corollary:Backward}
When the conditions in Theorem \ref{thm:geometricmean} statement hold, the canonical reverse symmetrized Bregman divergence on $\mathbb{S}^{n}_{++}$ is
\begin{align}
&D_{\psi}^{\overleftarrow{\mathrm{symm}}}(X,Y;\overleftarrow{M}_{\mathrm{canonical}})\nonumber\\
&= \underbrace{\frac{\psi^*(\nabla\psi(X))+\psi^*(\nabla\psi(Y))}{2} - \psi^*\!\left(\!\frac{\nabla\psi(X)+\nabla\psi(Y)}{2}\!\right)}_{{\emph{Jensen gap of }}\psi^*\emph{ at }\nabla\psi(X),\nabla\psi(Y)}\!.
\label{CanonicalReverseBregman}    
\end{align}
\end{corollary}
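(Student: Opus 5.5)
The plan is to evaluate \eqref{ReverseIntermsOfpsi} at the minimizer \eqref{BackwardMinimizer} and then rewrite each term using Fenchel duality. Write $G_X:=\nabla\psi(X)$, $G_Y:=\nabla\psi(Y)$ and $\bar G:=\frac{1}{2}(G_X+G_Y)$. By Theorem \ref{thm:geometricmean}, $M^\star:=\overleftarrow{M}_{\mathrm{canonical}}=\nabla\psi^*(\bar G)$, which means $\nabla\psi(M^\star)=\bar G$.

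The key tool is the Fenchel--Young equality for a Legendre-type $\psi$. Whenever $G=\nabla\psi(Z)$, we have
\begin{align*}
\psi^*(G)=\langle G,Z\rangle-\psi(Z).
\end{align*}
This holds because the supremum defining $\psi^*$ is attained at $Z$, by strict convexity and the first-order condition.

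We apply this equality three times.
\begin{itemize}
\item With $Z=X$: $\frac{1}{2}\langle G_X,X\rangle-\frac{1}{2}\psi(X)=\frac{1}{2}\psi^*(G_X)$.
\item With $Z=Y$: $\frac{1}{2}\langle G_Y,Y\rangle-\frac{1}{2}\psi(Y)=\frac{1}{2}\psi^*(G_Y)$.
\item With $Z=M^\star$: $\psi(M^\star)-\langle\bar G,M^\star\rangle=-\psi^*(\bar G)$.
\end{itemize}
Grouping the terms of \eqref{ReverseIntermsOfpsi} accordingly, the first two terms at $M=M^\star$ equal $\psi(M^\star)-\langle \bar G,M^\star\rangle$. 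The remaining four terms are exactly the first two bullets combined. Summing gives $\frac{1}{2}\psi^*(G_X)+\frac{1}{2}\psi^*(G_Y)-\psi^*(\bar G)$, which is the right-hand side of \eqref{CanonicalReverseBregman}.

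As a cross-check, one can use the duality identity $D_\psi(A\parallel B)=D_{\psi^*}(\nabla\psi(B)\parallel\nabla\psi(A))$. This turns the reverse symmetrization into a forward symmetrization for $\psi^*$ in the dual variables with mean $\bar G$. The decomposition \eqref{SumOfTwoTerms}, with its residual Bregman term vanishing, then yields the same Jensen gap of $\psi^*$.

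The main obstacle is justifying that $\psi^*$ is well defined and differentiable at $\bar G$, with $\nabla\psi^*=(\nabla\psi)^{-1}$ there. This requires $\bar G$ to lie in the range of $\nabla\psi$. For a Legendre-type $\psi$, the range of $\nabla\psi$ is the interior of $\mathrm{dom}\,\psi^*$, which is convex, so the midpoint $\bar G$ lies in it. This step is already implicitly used in Theorem \ref{thm:geometricmean}, and I would invoke Rockafellar's Legendre-type theorem for it. The remaining work is bookkeeping.
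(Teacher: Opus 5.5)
Your proof is correct and matches the paper's: substitute the minimizer into \eqref{ReverseIntermsOfpsi}, then apply the Fenchel--Young equality at $M^\star$ (using $\nabla\psi(M^\star)=\bar G$) and at $X$ and $Y$. Your extra remark that $\bar G$ lies in the range of $\nabla\psi$, because that range is the convex set $\mathrm{int}\,\mathrm{dom}\,\psi^*$, fills in a point the paper leaves implicit, and your dual cross-check via $D_\psi(A\parallel B)=D_{\psi^*}(\nabla\psi(B)\parallel\nabla\psi(A))$ is a valid alternative derivation of the same Jensen gap.
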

\begin{proof}
Substituting $\mu := \dfrac{\nabla\psi(X)+\nabla\psi(Y)}{2}$ and $M = \overleftarrow{M}_{\mathrm{canonical}}$ in \eqref{ReverseIntermsOfpsi}, we get
{\small{\begin{align}
&D_\psi^{\overleftarrow{\mathrm{symm}}}(X,Y;\overleftarrow{M}_{\mathrm{canonical}})
= \psi(\overleftarrow{M}_{\mathrm{canonical}}) - \langle \mu, \overleftarrow{M}_{\mathrm{canonical}}\rangle
 \nonumber\\
&- \frac{\psi(X)+\psi(Y)}{2}+ \dfrac{\langle \nabla\psi(X),X\rangle + \langle \nabla\psi(Y),Y\rangle}{2}.
\label{eq:step1}
\end{align}
}}
In \eqref{eq:step1}, we now substitute the following two applications of Fenchel duality:
\begin{align*}
&\psi(\overleftarrow{M}_{\mathrm{canonical}}) - \langle \mu, \overleftarrow{M}_{\mathrm{canonical}} \rangle = -\:\psi^*(\mu),\\
&\langle \nabla\psi(Z), Z\rangle = \psi(Z) + \psi^*(\nabla\psi(Z)) \quad \text{for}\;Z\in\{X,Y\},
\end{align*}
to cancel the $\psi(X)/2+\psi(Y)/2$ terms, yielding \eqref{CanonicalReverseBregman}.
\end{proof}
As expected, \eqref{CanonicalReverseBregman} vanishes at $X=Y$.  


\begin{table*}[!t]
\centering
    \caption{Canonical means and symmetrized Bregman divergences on $\mathbb{S}^{n}_{++}$ for the mirror maps $\psi$ in Table \ref{table:BregmanExamples} (the rows are in the same order). We do not list $\overrightarrow{M}_{\mathrm{canonical}}=\frac{X+Y}{2}$ since it holds unconditionally of the choice of $\psi$.}
\label{tableFinalExamples}
\small
{\setlength{\tabcolsep}{6pt}
{
\begin{tabular}{c|c|c|c}
\toprule 
Sec. & $\overleftarrow{M}_{\mathrm{canonical}}$ & $D_{\psi}^{\overrightarrow{\mathrm{symm}}}$ & $D_{\psi}^{\overleftarrow{\mathrm{symm}}}$\\[0.1ex]
\hline
\ref{subsec:SquaredFrob} & $\frac{X+Y}{2}$ & $\frac{1}{4}\|X-Y\|_{\mathrm{F}}^{2}$ & $\frac{1}{4}\|X-Y\|_{\mathrm{F}}^{2}$\\[0.1ex]
\ref{subsec:NegVonNeumannEntropy}  & $\exp\left(\frac{\log X + \log Y}{2}\right)$ & $\frac{1}{2}{\mathrm{trace}}(X\log X + Y\log Y)\!-\!{\mathrm{trace}}\!\left(\!\frac{X+Y}{2}\log\frac{X+Y}{2}\!\right)$  & ${\mathrm{trace}}\left(\dfrac{X+Y}{2}\right)-{\mathrm{trace}}\!\left(\exp\!\left(\frac{\log X+\log Y}{2}\right)\right)$\\[0.1ex]
\ref{subsec:BurgEntropy}  & $\left(\!\frac{X^{-1} + Y^{-1}}{2}\!\right)^{\!\!-1}$ & $\log\det\!\left(\!\frac{X+Y}{2}\!\right)-\frac{1}{2}\log\det(XY)$ & $\frac{1}{2}\log\det(XY) \;+\; \log\det\!\left(\frac{X^{-1}+Y^{-1}}{2}\right)$\\
\bottomrule 
\end{tabular}
}}
\end{table*}

\section{Examples}\label{sec:examples}
We next give some details for the three examples mentioned in Table \ref{table:BregmanExamples} since these are encountered frequently in applications. For the readers' convenience, a summary of the following is presented in Table \ref{tableFinalExamples}.

\subsection{The Case $\psi(X)=\|X\|_{\mathrm{F}}^2$}\label{subsec:SquaredFrob}
Here, the gradient and the Hessian are
$$\nabla \psi(X) = 2X\in\mathbb{S}^{n}_{++}\subset\mathbb{S}^{n},\; {\mathrm{H}}\psi(X) = 2\left(I_n\otimes I_n\right)\in\mathbb{S}^{n}_{++},$$
where $I_n$ denotes the $n\times n$ identity matrix, and $\otimes$ denotes the Kronecker product. 

In this case\footnote{We clarify here that $\psi(X)=\|X\|_{\mathrm{F}}^2$ is a Legendre-type function on the \emph{full} space $\mathbb{S}^n$ (a closed set with no boundary to blow up at), where it is strictly convex and $\nabla\psi$ is a global diffeomorphism onto $\mathbb{S}^n$. Example~III-A and Table~I row~1 should be understood for this function \emph{restricted} to $\mathbb{S}^{n}_{++}\subset\mathbb{S}^n$; all stated formulas remain valid there since $\mathbb{S}^{n}_{++}$ is an open subset of the ambient domain $\mathbb{S}^{n}$ on which restricted computation are performed.}, 
$$\psi^{*}(Y) = \frac{1}{4}{\mathrm{trace}}(Y^2), Y\in\mathbb{S}^{n}_{++},\quad\nabla \psi^{*}(Y) = \frac{1}{2}Y,$$
which from \eqref{ForwardMinimizer} and \eqref{BackwardMinimizer} yields  
$$\overrightarrow{M}_{\mathrm{canonical}}= \overleftarrow{M}_{\mathrm{canonical}} = \frac{X+Y}{2}.$$
Since the arithmetic mean is ${\mathrm{GL}}(n)$ invariant, the optimizer $\overleftarrow{M}_{\mathrm{canonical}}$ for this $\psi$ is, in fact, in $\mathcal{M}$.

Furthermore, $\psi^*(\nabla\psi(X)) = \frac{1}{4}\|2X\|_{\mathrm{F}}^2 = \|X\|_F^2$. From \eqref{JensenShannonSymmetrization} and \eqref{CanonicalReverseBregman}, the symmetrized Bregman divergences
$$D_{\psi}^{\overrightarrow{\mathrm{symm}}} = D_{\psi}^{\overleftarrow{\mathrm{symm}}} = \frac{1}{4}\|X-Y\|_{\mathrm{F}}^{2}.$$


\subsection{The Case $\psi(X)={\mathrm{trace}}(X\log X - X)$}\label{subsec:NegVonNeumannEntropy}
We find the gradient $\nabla \psi(X) = \log X\in\mathbb{S}^{n}$ where $\log$ denotes the principal logarithm, and the Hessian $${\mathrm{H}}\psi(X) = \int_{0}^{\infty}\left((tI_n + X)^{-1}\otimes (tI_n + X)^{-1}\right)\differential t\in\mathbb{S}^{n}_{++},$$
see e.g., \cite[Lemma 2(ii)]{halder2018gradient}. 

Direct computation gives $$\psi^{*}(Y) ={\mathrm{trace}}(\exp Y),\quad\nabla \psi^{*}(Y) = \exp Y.$$
From \eqref{ForwardMinimizer} and \eqref{BackwardMinimizer},  
$$\overrightarrow{M}_{\mathrm{canonical}}=\frac{X+Y}{2},\;\overleftarrow{M}_{\mathrm{canonical}} = \exp\left(\frac{\log X + \log Y}{2}\right),$$
namely the arithmetic and log-Euclidean means, respectively. Since the log-Euclidean mean is ${\mathrm{O}}(n)$ invariant, the optimizer $\overleftarrow{M}_{\mathrm{canonical}}$ is in $\widehat{\mathcal{M}}$ but not in $\mathcal{M}$.

From \eqref{JensenShannonSymmetrization}, the forward symmetrized Bregman divergence
\begin{align*}
D_{\psi}^{\overrightarrow{\mathrm{symm}}} &= \frac{1}{2}{\mathrm{trace}}(X\log X + Y\log Y)\\
&\qquad\qquad-{\mathrm{trace}}\left(\frac{X+Y}{2}\log\frac{X+Y}{2}\right).
\end{align*}
From \eqref{CanonicalReverseBregman} and that$$\psi^*(\nabla\psi(X)) = {\mathrm{trace}}(\exp(\log X)) = {\mathrm{trace}}(X),$$the reverse symmetrized Bregman divergence
\begin{align*}
D_{\psi}^{\overleftarrow{\mathrm{symm}}} = {\mathrm{trace}}\left(\dfrac{X+Y}{2}\right)-{\mathrm{trace}}\!\left(\!\exp\!\left(\frac{\log X+\log Y}{2}\right)\!\right).
\end{align*}


\subsection{The Case $\psi(X)=-\log\det X$}\label{subsec:BurgEntropy}
In this case, we compute the gradient 
$$\nabla \psi = -X^{-1}\in\mathbb{S}^{n}_{--}\subset\mathbb{S}^{n},$$
and the Hessian
$${\mathrm{H}}\psi(X) = \left(X^{-1}\otimes X^{-1}\right)\in\mathbb{S}^{n}_{++}.$$

We next find
$$\psi^{*}(Y) =-n-\log\det(-Y), Y\in\mathbb{S}^{n}_{--},\:\nabla \psi^{*}(Y) = -Y^{-1}.$$
Then, \eqref{ForwardMinimizer} and \eqref{BackwardMinimizer} give  
$$\overrightarrow{M}_{\mathrm{canonical}}=\frac{X+Y}{2},\;\overleftarrow{M}_{\mathrm{canonical}} = \left(\frac{X^{-1} + Y^{-1}}{2}\right)^{\!\!-1},$$
namely the arithmetic and harmonic means, respectively. Since the harmonic mean is ${\mathrm{GL}}(n)$ invariant, the optimizer $\overleftarrow{M}_{\mathrm{canonical}}$ for this $\psi$ is, in fact, in $\mathcal{M}$.

From \eqref{JensenShannonSymmetrization}, we obtain the forward symmetrized Bregman divergence
\begin{align*}
D_{\psi}^{\overrightarrow{\mathrm{symm}}} &= \log\det\left(\frac{X+Y}{2}\right)-\frac{1}{2}\log\det(XY),
\end{align*}
which is the $S$-divergence \cite{sra2016positive}. Using \eqref{CanonicalReverseBregman} and that $$\psi^*(\nabla\psi(X)) = -n - \log\det X^{-1} = \log\det X - n,$$ the reverse symmetrized Bregman divergence
\begin{align*}
D_{\psi}^{\overleftarrow{\mathrm{symm}}} =\frac{1}{2}\log\det(XY) \;+\; \log\det\!\left(\frac{X^{-1}+Y^{-1}}{2}\right).
\end{align*}


\section{Conclusion}\label{sec:conclusion}
Bregman divergences and their symmetrizations find broad usage across control, optimization and learning algorithms. For \emph{specific mirror maps} such as the Burg entropy and the negative von Neumann entropy, existing literature commonly use the arithmetic mean for symmetrizing the Bregman divergence on the cone positive definite matrices. Driven by the intuition that there might be variational principles lurking behind the scene, we investigated if there could be a notion of canonical mean for this symmetrization procedure for \emph{generic mirror maps}. We proposed variational formulations for the canonical means. We showed that for forward symmetrization, the arithmetic mean is canonical irrespective of the choice of the mirror map. For reverse symmetrization, we proved that the canonical mean is obtained by first computing the arithmetic mean in dual space, and then pulling it back to the primal space. We derived the corresponding symmetrized Bregman divergences in terms of their mirror maps. These results were exemplified via three mirror maps commonly used for the cone of positive definite matrices.

While we focused on the forward and reverse symmetrizations \eqref{defSymmetrizedBregmanDiv} and \eqref{defReverseSymmetrizedBregmanDiv}, as they are most common in literature, it is possible to extend our analyses for other symmetrizations, such as the geometric and harmonic a.k.a. resistor symmetrizations \cite{nielsen2019jensen}, and regularized variants \cite[Sec. 3]{acharyya2013bregman}. These will be pursued in future work.


\balance

\bibliographystyle{IEEEtran}
\bibliography{References.bib}

\end{document}